\documentclass[12pt]{amsart}

\usepackage{amssymb, amsfonts,amsthm,latexsym, extpfeil}
\usepackage{eqlist, color}
\usepackage{mathrsfs, textcomp}
\usepackage{graphicx}
\usepackage{bm}
\usepackage{enumitem}

\usepackage{enumitem}

\usepackage[all]{xy}

\makeatletter
\@namedef{subjclassname@2020}{%
  \textup{2020} Mathematics Subject Classification}
\makeatother

\newtheorem{theorem}{Theorem}[section]

\theoremstyle{definition}

\newtheorem{example}[theorem]{Example}
\newtheorem{remarks}[theorem]{Remarks}

\newtheorem{blank}[theorem]{}

\frenchspacing

\textwidth=13.5cm
\textheight=23cm
\parindent=16pt
\oddsidemargin=-0.5cm
\evensidemargin=-0.5cm
\topmargin=-0.5cm



\DeclareMathOperator{\coz}{coz}
\DeclareMathOperator{\clop}{clop}

\DeclareMathOperator{\loc}{loc}
\DeclareMathOperator{\Loc}{Loc}
\DeclareMathOperator{\band}{band}
\DeclareMathOperator{\Min}{Min}
\DeclareMathOperator{\BA}{BA}
\DeclareMathOperator{\St}{St}
\DeclareMathOperator{\RC}{RC}
\DeclareMathOperator{\cl}{cl}
\DeclareMathOperator{\interior}{int}


\begin{document}

\baselineskip=17pt


\title[SMP in $\bf{W}$]{Sufficiently many projections in archimedean vector lattices with weak unit}

\author[A. W. Hager]{Anthony W. Hager}
\address{Department of Mathematics \\ Wesleyan University\\
Middletown, CT 06459\\
USA}
\email{ahager@wesleyan.edu}

\author[B. Wynne]{Brian Wynne$^{\ast}$}
\address{Department of Mathematics \\
 Lehman College, CUNY \\
 Gillet Hall, Room 211 \\
 250 Bedford Park Blvd West \\
 Bronx, NY 10468 \\
 USA} \thanks{$^{\ast}$corresponding author}
\email{brian.wynne@lehman.cuny.edu}

\date{}

\begin{abstract}
The property of a vector lattice of sufficiently many projections (SMP) is informed by restricting attention to archimedean $A$ with a distinguished weak order unit $u$ (the class, or category, $\bf{W}$), where the Yosida representation $A \leq D(Y(A,u))$ is available. Here, $A$ SMP is equivalent to $Y(A,u)$ having a $\pi$-base of clopen sets of a certain type called ``local". If the unit is strong, all clopen sets are local and $A$ is SMP if and only if $Y(A,u)$ has clopen $\pi$-base, a property we call $\pi$-zero-dimensional ($\pi$ZD). The paper is in two parts: the first explicates the similarities of SMP and $\pi$ZD; the second consists of examples, including $\pi$ZD but not SMP,  and constructions of many SMP's which seem scarce in the literature.
\end{abstract}

\subjclass[2020]{46A40, 06F20, 54D35, 54H10}

\keywords{vector lattice, Riesz space, archimedean, Sufficiently Many Projections, Yosida Representation, clopen set, zero-dimensional, pi-base}

\maketitle

\section{Introduction/preliminaries}\label{Sec1}

\begin{blank}
The property SMP of a vector lattice $A$ is that each non-$\{ 0 \}$ band contains a non-$\{ 0 \}$ projection band (definitions provided below). This appears to have been introduced in \cite[Definition 30.3]{LZ71}; a literature has developed (see \cite{AB78}), which to our knowledge, contains few examples without the principal projection property PPP (every principal band is a projection band). We shall provide many here, from several constructions, and as outlined in the Abstract, after first examining the relation for $(A,u) \in \bf{W}$ of  the vector lattice property SMP to purely topological properties of the Yosida space $Y(A,u)$ (details of the Yosida representation provided below). 

We shall proceed to outlining material on bands, etc., the Yosida representation in $\bf{W}$, $A \leq D(Y(A,u))$, and the interpretation of SMP therein, which has everything to do with clopen sets in $Y(A,u)$.

Most terminology and notations are explained in Sections \ref{Sec1} and \ref{Sec2}. We note especially: ZD and $\pi$ZD, and covers in Section \ref{Sec3}. Local and $\lambda A$ in Section \ref{Sec4}. The constructs $G(X,\tau)$ in Section \ref{Sec5}, and $A(X, \mathcal{Z})$ in Section \ref{Sec6}.

Our main references are \cite{LZ71} (for vector lattices); \cite{AF89}, \cite{BKW77}, \cite{D95} (for $\ell$-groups, more general than vector lattices); \cite{GJ60} (for $C(X)$ and related topology, which theory strongly motivates the study of $\bf{W}$);  \cite{E89} (for topology).
\end{blank}

\begin{blank}
\underline{Bands and SMP}. (from \cite{LZ71})

In a vector lattice $A$:

If $S \subseteq A$, then $S^d = \{ a \in A \mid \vert a \vert \wedge \vert s \vert = 0 \, \forall s \in S \}$. If $S = \{ s \}$, we sometimes write $s^d$ instead of $S^d$.

A \emph{band} is an ideal closed under existing suprema of subsets of its elements. If $S \subseteq A$, then $S^{dd}$ is called \emph{the band generated by $S$ in $A$}.

$A$ has \emph{sufficiently many projections} -- we write SMP -- if each non-zero band contains a non-zero projection band (a band $B$ for which $A = B \oplus B^c$ for some band $B^c \subseteq A$). SMP implies archimedean, which implies that band $=$ polar ($B = B^{dd}$), and the $B^c$ previous is $B^{d}$.

If $A$ has a weak unit (an $e > 0$ with $e^{d} = \{ 0 \}$), then each projection band is principal, so then $A$ has SMP if and only if each non-zero principal band contains a non-zero projection band, thus a projection element.
\end{blank}

\begin{blank}\label{1.3}
\underline{Some topology}. (see \cite{GJ60}, \cite{E89})

(a) All topological spaces $X$ will be at least Tychonoff, usually compact.
\[
D(X) = \{ f \in C(X, \mathbb{R} \cup \{ \pm \infty \}) \mid f^{-1}(\mathbb{R}) \mbox{ dense in } X\}
\]
Here, $\mathbb{R}$ is the reals and $\mathbb{R} \cup \{ \pm \infty \}$ is given the obvious order and topology. This $D(X)$ is a lattice, and closed under multiplication by real scalars; addition and multiplication are only partly defined. We note that $+$ and $\cdot$ are fully defined if $X$ is extremally disconnected  (or more generally if $X$ is quasi-$F$).

For $f \in D(X)$, $\coz(f) = \{ x \in f(x) \neq 0 \}$, and for $S \subseteq D(X)$, $\coz(S) = \{ \coz(s) \mid s \in S \}$.

(b) $U \subseteq X$ is called \emph{clopen} if $U$ is both closed and open, and $\clop(X) = \{ U \subseteq X \mid U \mbox{ is clopen}\}$. This is a Boolean Algebra (BA), a sub-BA of the power set of $X$.

$X$ is called \emph{zero-dimensional} (ZD) if $\clop(X)$ is a base for the topology on $X$.

A \emph{$\pi$-base} for $X$ is a family $\mathcal{B}$ of open sets such that any nonempty open set contains a nonempty $B \in \mathcal{B}$. We say that $X$ is $\pi$ZD if $\clop(X)$ is a $\pi$-base for $X$.

(c) (This material will be mentioned immediately in \ref{1.4}, but not used until Section \ref{Sec5}.) A continuous surjection of compact spaces $X \xtwoheadleftarrow{\tau} Y$ is called \emph{irreducible} if $F$ closed in $Y$, $F \neq Y$ implies $\tau(F) \neq X$. Then $(Y,\tau)$ (or $Y$, or $\tau$) is called a \emph{cover} of $X$. Any $X$ has the maximum cover, the Gleason cover, which we denote $gX$; it is the unique extremally disconnected cover. Generally, an $X$ has many covers, arising from topological and algebraic considerations. References are \cite{H89} and \cite{PW88} (our recent \cite{HW20}, \cite{HW24b}, and \cite{CH} and their many references).
\end{blank}

\begin{blank}\label{1.4}
\underline{In \bf{W}}. (See \cite{HR77})

(a) $\bf{W}$ is the category with objects the archimedean vector lattices $A$ with distinguished weak order unit $u_A$, morphisms $(A,u_A) \xrightarrow{\varphi} (B, u_B)$ the vector lattice homomorphisms with $\varphi(u_A) = u_B$. Now suppressing $u_A$; if $A \in \bf{W}$, the Yosida space $YA$ is the compact Hausdorff space whose points are ideals $M$ maximal for $u_A \notin M$, with the hull-kernel topology. Then, there is the Yosida representation $A \approx \eta(A) \subseteq D(YA)$, for which $\eta(u_A) = 1$, the constant function, and $\eta(A)$ 0-1 separates closed sets in $YA$, so $\coz(\eta(A))$ is an open base for the space $YA$. As noted earlier, addition may be only partially defined in $D(YA)$, but is fully defined in $\eta(A)$.

For $A \in \bf{W}$, we suppress notation further writing $A \leq D(YA)$, $\coz(A)$, etc.

If $U \in \clop(YA)$, then its characteristic function $\chi(U) \in A$ (since $A$ 0-1 separates $U$ and $YA-U$).

We write $A^*$ for the set of all $a \in A$ such that $\vert a \vert \leq nu_A$ for some $n \in \mathbb{N}$ (the natural numbers). In particular, we write $C^*(X)$ for the bounded functions in $C(X)$.

(b) A $\bf{W}$-morphism $A \xrightarrow{\varphi} B$ has its ``Yosida dual" $YA \xleftarrow{Y\varphi} YB$, and $\varphi$ is injective if and only if $Y\varphi$ is surjective. Further, $\varphi$ is injective (an embedding) which is essential in $\bf{W}$ (categorically; see \cite{HS79}) if and only if $Y\varphi$ is a cover (irreducible). This realizes $D(gYA)$ as the maximum essential extension of $A$.

We have explicated the meaning of $A \in \bf{W}$, its $YA$, and the Yosida representation $A \leq D(YA)$. The rest of this paper is in that setting, which will occasionally be repeated.
\end{blank}

\begin{blank}\label{1.5}
\underline{Bands in $A$ and clopen sets in $YA$}.

We continue with $A \in \bf{W}$, as $A \leq D(YA)$.

The following discussion is drawn from \cite{HM96}, \cite{HKM03}, and \cite{HW24}.

For $a \in A$, $\coz(a) \subseteq YA$; for $S \subseteq A$ and $\coz(S) = \bigcup \{ \coz(s) \mid s \in S \}$.

Then the band generated by $S$ is $\{ f \in A \mid \coz(f) \subseteq \overline{\coz(S)} \}$ (the closure operation being that of $YA$). In particular, the \emph{principal band} $a^{dd} = \{ f \in A \mid \coz(f) \subseteq \overline{\coz(a)} \}$.

Since $\coz(A)$ is an open base for $YA$, the $\coz(S)$ (respectively, $\overline{\coz(S)}$) are just the general open sets (respectively, regular closed sets), so the bands are faithfully parametrized by the regular closed $E$, as $\band(E) = \{ f \in A \mid \coz(f) \subseteq E \}$.

If such $\band(E)$ is a projection band (thus principal), then $E \in \clop(YA)$ (emphatically not conversely, as will be seen), and $\band(E) = \chi(E)^{dd}$.

Summing up, and changing the notation: Every projection band is of the form, for $U \in \clop(YA)$, $\band(U) = \chi(U)^{dd}$, and the decomposition $A = \chi(U)^{dd} \oplus \chi(U)^d$ is, for $f \in A$, $f = f\chi(U) + f\chi(U^c)$ ($U^c = YA - U$). That is, these $U$ satisfy: $f \in A$ implies $f\chi(U) \in A$ (n.b. $D(YA)$ and $A$ have only partial multiplication, but these $U$ have all $f\chi(U) \in A$.)

We call these $U$ ``of local type" (with respect to $A$), the terminology to be explained shortly.
\end{blank}


\section{SMP in $\bf{W}$}\label{Sec2}

This section is a reprise of Section 5 of \cite{HW24b}, with additional comments. Always $A \in \bf{W}$, $YA$ may be denoted $Y$, and $A \leq D(YA)$.

\begin{blank}\label{2.1}
\underline{Local type}.

(a) Definition. $U \in \clop(YA)$ is of \emph{local type} if $a\chi(U) \in A$ for every $a \in A$. Let $\lambda A$ be the family of all such $U$.

(b) $\lambda A$ is a sub-BA of $\clop(YA)$ (is easily shown).
\end{blank}

The following explains the terminology.

\begin{blank}\label{2.2}
\underline{Local}. (\cite[Section 5]{HR78}) Suppose $(A,u) \in \bf{W}$, $A \leq D(Y)$.

(a) $f \in D(Y)$ is \emph{locally in $A$} if for every $ x \in Y$, there are $U_x$ open in $Y$ and $a_x \in A$ such that $x \in U$ and $f(y) = a_x(y)$ for every $y \in U_x$. Since $Y$ is compact, finitely many $U_x$ and $a_x$ suffice, and if $Y$ is ZD, the $U_x$ can be clopen. Let $\loc A$ consist of all $f \in D(Y)$ with $f$ locally in $A$. Then, $(\loc A, u) \in \bf{W}$ and $Y(\loc A,u) = Y$.

Call $A$ \emph{local} if $A = \loc A$, and let $\Loc$ be the collection of all $(A,u) \in \bf{W}$ such that $A$ is local. The operator $\loc \colon \bf{W} \to \Loc$ is a monoreflection. (See \cite{HS79} for background on reflections and monoreflections.)

If $(A,u)$ has $u$ a strong unit ($A^* = A$) or $u$ the identity for an $f$-ring multiplication on $A$, then $A$ is local.

(b) (\cite[Remarks 2.3(d)]{HM96}, also \cite{HKM03}) If $A$ is local, then $a\chi(U) \in A$ whenever $U \in \clop(Y)$ and $a \in A$. The converse holds if $Y$ is ZD.

(c) If $A$ is not local, then $A$ is not PPP (\cite[Theorem 2.2]{HM96}).
\end{blank}

We quote from \cite[Section 5]{HW24b}.

\begin{theorem}\label{2.3}
\begin{itemize}
\item[(a)] $A$ has SMP if and only if $\lambda A$ is a $\pi$-base in $YA$.
\item[(b)] If $A$ has SMP, then $YA$ is $\pi$ZD.
\item[(c)] If $YA$ is $\pi$ZD and $\lambda A = \clop(YA)$, then $A$ has SMP.
\item[(d)] These are equivalent: $\loc A$ is SMP; $A^*$ is SMP; $YA$ is $\pi$ZD.
\end{itemize}
\end{theorem}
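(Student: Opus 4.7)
The plan is to prove (a) directly from the band/regular-closed dictionary of \ref{1.5}, and then obtain (b), (c), (d) as short corollaries using (a) together with \ref{2.2}. The key identification is: the nonzero bands in $A$ are exactly $\band(E) = \{f \in A \mid \coz(f) \subseteq E\}$ for nonempty regular closed $E \subseteq YA$; the nonzero projection bands are exactly $\chi(U)^{dd} = \band(U)$ for nonempty $U \in \lambda A$; and $\band(U) \subseteq \band(E)$ iff $U \subseteq E$ (using $\chi(U) \in \band(U)$ together with $\coz(\chi(U)) = U$).

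For (a), in the direction SMP $\Rightarrow \lambda A$ a $\pi$-base, I would argue as follows. Given a nonempty open $V \subseteq YA$, use regularity of the compact Hausdorff space $YA$ together with the fact that $\coz(A)$ is an open base to produce $a \in A$ with $\emptyset \neq \coz(a)$ and $\overline{\coz(a)} \subseteq V$. Then $a^{dd} = \band(\overline{\coz(a)})$ is a nonzero band, so SMP gives a nonempty $U \in \lambda A$ with $\chi(U)^{dd} \subseteq a^{dd}$, which by the identification above forces $U \subseteq \overline{\coz(a)} \subseteq V$. Conversely, a nonzero band $\band(E)$ has $E$ regular closed with nonempty interior $\Int E$, so if $\lambda A$ is a $\pi$-base there is a nonempty $U \in \lambda A$ with $U \subseteq \Int E \subseteq E$, yielding a nonzero projection band $\chi(U)^{dd} \subseteq \band(E)$.

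Parts (b) and (c) are then immediate from (a): since $\lambda A \subseteq \clop(YA)$, if $\lambda A$ is a $\pi$-base then so is $\clop(YA)$, giving (b); and the hypothesis of (c) gives $\lambda A = \clop(YA)$ is a $\pi$-base, whence SMP by (a). For (d), both $\loc A$ and $A^*$ are local: the first by construction (\ref{2.2}(a)), the second because $u_A$ is a strong unit for $A^*$. Standard Yosida theory, together with \ref{2.2}(a), gives $Y(\loc A) = YA = Y(A^*)$. By \ref{2.2}(b), the locality of $\loc A$ and of $A^*$ forces $\lambda(\loc A) = \clop(YA) = \lambda(A^*)$. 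Applying (a) to each, $\loc A$ is SMP iff $\clop(YA)$ is a $\pi$-base iff $YA$ is $\pi$ZD iff $A^*$ is SMP.

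The main obstacle I expect is the regularity trick in the forward direction of (a): the naive argument, taking $a$ merely with $\coz(a) \subseteq V$, only delivers $U \subseteq \overline{\coz(a)}$, and this closure can spill out of $V$. Shrinking first so that $\overline{\coz(a)} \subseteq V$, using normality of the compact Hausdorff space $YA$ together with the cozero base, is what makes the argument produce a $U \in \lambda A$ genuinely sitting inside $V$; once this is in hand everything else is bookkeeping on top of \ref{1.5} and \ref{2.2}.
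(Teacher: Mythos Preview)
The paper does not give its own proof of Theorem~\ref{2.3}; it quotes the result from \cite[Section 5]{HW24b}. Your argument is correct and is the natural one: part~(a) is read off directly from the band/regular-closed dictionary of \ref{1.5} (with the normality step you flag to get $\overline{\coz(a)} \subseteq V$), and (b)--(d) follow from (a) together with \ref{2.2}(a),(b) and the standard identifications $Y(\loc A) = YA = Y(A^*)$.
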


Thus, given $A$, recognizing that $A$ has or has not SMP, requires some recognition of the $U \in \lambda A$. This will be further highlighted in the following Section \ref{Sec3}, and seen in our various examples in Sections \ref{Sec4} - \ref{Sec8}.

We note: if $YA$ is connected, then $\clop(YA) = \lambda A = \{ \emptyset, YA \}$, so the only projection bands are $\{ 0 \}$ and $A$, but this last can occur even if $YA$ is ZD (see Section \ref{Sec8} below).

Another view of an algebraic significance of $\pi$ZD appears in \cite{LRM21}.


\section{Further similarities of SMP and $\pi$ZD}\label{Sec3}





In \ref{3.1} below we give three ways of describing each of $\pi$ZD and SMP, and include the very easy proof.

After that, we shall only sketch: In \ref{3.2}, some interpretations of \ref{3.1} involving Boolean Algebras, Stone and Yosida spaces, and covers; in \ref{3.3}, some variants of the conditions in \ref{3.1}. These \ref{3.2},\ref{3.3} represent food for thought. Most definitions, etc., are in some sense familiar and are omitted; and we omit proofs, which with details, can be lengthy, and the paper is long enough.

\begin{theorem}\label{3.1}
($\pi$ZD) For compact $K$, the following are equivalent.
\begin{itemize}
\item[(1)] $K$ is $\pi$ZD.
\item[(2)] For each $W$ open in $K$, there is $\mathcal{U} \subseteq \clop(K)$ with $\overline{\bigcup \mathcal{U}} = \overline{W}$.
\item[(3)] For each $C \in \coz(C(K))$, there is  $\mathcal{U} \subseteq \clop(K)$ with $\overline{\bigcup \mathcal{U}} = \overline{C}$.
\end{itemize}
(SMP) For $A \in \bf{W}$, with its $YA$ and $A \leq D(YA)$, the following are equivalent.
\begin{itemize}
\item[(1)] $A$ has SMP.
\item[(2)] For each $W$ open in $YA$, there is $\mathcal{U} \subseteq \lambda A$ with $\overline{\bigcup \mathcal{U}} = \overline{W}$.
\item[(3)] For each $C \in \coz(A)$, there is $\mathcal{U} \subseteq \lambda A$ with $\overline{\bigcup \mathcal{U}} = \overline{C}$.
\end{itemize}
\end{theorem}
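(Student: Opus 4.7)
The two blocks are structurally parallel: by Theorem \ref{2.3}(a), SMP of $A$ means that $\lambda A$ is a $\pi$-base for $YA$, and $\pi$ZD of $K$ means by definition that $\clop(K)$ is a $\pi$-base for $K$. So in both blocks (1) asserts that a specified family $\mathcal{B}$ of open sets (namely $\clop(K)$, respectively $\lambda A$) is a $\pi$-base for the compact Hausdorff ambient space, and my plan is to run a single uniform argument for $(1)\Leftrightarrow(2)\Leftrightarrow(3)$, exploiting that $\coz(C(K))$ (respectively $\coz(A)$) is an open base for that space.

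For $(1)\Rightarrow(2)$, given open $W$ I would take $\mathcal{U}=\{U\in\mathcal{B}\mid U\subseteq W\}$. The inclusion $\overline{\bigcup\mathcal{U}}\subseteq\overline{W}$ is immediate; for the reverse, I fix $x\in W$ and an open neighborhood $N$ of $x$, apply the $\pi$-base hypothesis to $N\cap W$ to obtain a non-empty $U\in\mathcal{B}$ with $U\subseteq N\cap W$, and note that $U\in\mathcal{U}$ meets $N$, so $x\in\overline{\bigcup\mathcal{U}}$. The step $(2)\Rightarrow(3)$ is trivial since cozero sets are open.

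For $(3)\Rightarrow(1)$ in the $\pi$ZD case, given non-empty open $W\subseteq K$, I would use normality of $K$ and Urysohn's lemma to produce $f\in C(K)$ with $\emptyset\neq\coz(f)$ and $\overline{\coz(f)}\subseteq W$; applying (3) to $C=\coz(f)$ yields $\mathcal{U}\subseteq\clop(K)$ with $\overline{\bigcup\mathcal{U}}=\overline{C}\neq\emptyset$, so any non-empty $U\in\mathcal{U}$ is a clopen subset of $W$. For the SMP case I would invoke the reformulation assembled from \ref{1.5}, namely that SMP is equivalent to: for every $a\in A\setminus\{0\}$ there is $U\in\lambda A$ with $\emptyset\neq U\subseteq\overline{\coz(a)}$ (using $a^{dd}=\band(\overline{\coz(a)})$ and that projection bands are precisely the $\chi(U)^{dd}$ for $U\in\lambda A$); then applying (3) to $C=\coz(a)$ delivers this $U$ directly.

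No substantial obstacle arises; the only non-bookkeeping step is the normality/Urysohn reduction in the $\pi$ZD part of $(3)\Rightarrow(1)$, which extracts a cozero set whose closure lies in a prescribed open set. The SMP analogue is automatic because each non-zero principal band is already of the form $\band(\overline{\coz(a)})$.
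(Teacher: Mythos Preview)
Your argument is correct and follows essentially the same route as the paper's proof: both choose $\mathcal{U}=\{U\in\mathcal{B}\mid U\subseteq W\}$ for $(1)\Rightarrow(2)$ (you phrase it as a direct density computation, the paper as a contradiction), both note $(2)\Rightarrow(3)$ is immediate, and for $(3)\Rightarrow(1)$ both use that the relevant cozero family is a base to find $C$ with $\overline{C}\subseteq W$. The only cosmetic difference is that for the SMP block at $(3)\Rightarrow(1)$ you verify the band-theoretic formulation of SMP directly from \ref{1.5}, whereas the paper simply repeats the $\pi$-base argument (replacing $\clop(K)$ by $\lambda A$ throughout) and invokes \ref{2.3}(a); these are equivalent and equally short.
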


\begin{proof}
For $\pi$ZD only. For SMP, just replace $\clop(K)$ by $\lambda A$.

(1) $\Rightarrow$ (2). Let $X$ be the closure of $\bigcup \{ U \mid \clop(K) \ni U \subseteq W \}$. So $X \subseteq \overline{W}$. If $X \neq \overline{W}$, there is open $V \neq \emptyset$ with $V \cap X = \emptyset$ and $V \cap W \neq \emptyset$, but also there is $U \in \clop(K)$ with $\emptyset \neq U \subseteq V \cap W$ by (1). Contradiction.

(2) $\Rightarrow$ (3). Each $C$ is a $W$.

(3) $\Rightarrow$ (1). The $C$'s form a base, so if $W \neq \emptyset$, there is $C \neq \emptyset$ with $\overline{C} \subseteq W$. Apply (3) to get $\overline{\bigcup \mathcal{U}} = \overline{C}$. So there is $U \neq \emptyset$ with $U \subseteq W$. 
\end{proof}

\begin{blank}\label{3.2}
\underline{Some interpretations of \ref{3.1}}.

We begin with Boolean Algebras (BA). This is natural since $A$ has SMP if and only if the BA of projection bands is order dense in the BA of all bands. (These are BA's by \cite[13.7]{D95} and \cite[30.2]{LZ71}.)  Then, through Stone and Yosida duality, move into topology.

(a) For $\mathscr{A} \in \BA$, $\St(\mathscr{A})$ will denote the Stone space of $\mathscr{A}$, whereby $\mathscr{A}$ is $\BA$-isomoprhic to $\clop(\St(\mathscr{A}))$.

For compact $K$, $F(K)$ denotes $\{ f \in C(K) \mid \vert f(K) \vert < \omega \}$. Then, $F(K) \in \bf{W}$, $YF(K) = \St(\clop(K))$, and this is also the ZD-reflection of $K$, denoted $zK$. (See \cite{Herr68} about reflections, and also \cite{HS79} more generally.)

Let $\RC(K)$ denote the BA of regular closed subsets of $K$ (those $E$ with $E = \cl \interior E$). Then, $\St(\RC(K)) = gK$, the Gleason cover, which is the maximum cover.

For BA's, $\mathscr{A}$ is order-dense in $\mathscr{B}$ if and only if the Stone dual map $\St(\mathscr{A}) \leftarrow \St(\mathscr{B})$ is a cover. (This is well-known, follows from the corresponding fact in $\bf{W}$ applied to $F(\St(\mathscr{A})) \leq F(\St(\mathscr{B}))$.)

One may see \cite{S69}, \cite{PW88} (and many other references) for most of the above. The assertion about $F(K)$ and $zK$ may be novel, but are quite easy.

The following now becomes obvious. \\

(b) \underline{Theorem}. For compact $K$ (respectively, $A \in \bf{W}$), the following are equivalent.
\begin{itemize}
\item[(i)] $K$ is $\pi$ZD (respectively, $A$ has SMP).
\item[(ii)] In $\BA$'s, $\clop(K)$ is order-dense in $\RC(K)$ (respectively, $\lambda A$ is order-dense in $\RC(YA)$).
\item[(iii)] $zK \twoheadleftarrow K$ (respectively, $\St(\lambda A) \twoheadleftarrow YA$) is a cover. 
\end{itemize} 
\end{blank}

Understanding the meaning of the SMP-part of (iii) depends on understanding $\lambda A$. This is not so easy, as our many examples in Sections \ref{Sec4} - \ref{Sec8} show.

\begin{blank}\label{3.3}
\underline{Variations of the conditions}.

We offer just a few.

Each condition in the two parts of \ref{3.1} has an assumption and a conclusion about $W/C$ and $\mathcal{U}$. We code variations as illustrated:

[$\pi$ZD(2) conclude $\bigcup \mathcal{U} = W$] means change \ref{3.1}$\pi$ZD(2) to: for all open $W$ there exists $\mathcal{U} \subseteq \clop(K)$ with $\bigcup \mathcal{U} = W$.

[$\pi$ZD(2) assume $\vert \mathcal{U} \vert < \omega_1$] means change \ref{3.1}$\pi$ZD(2) to: for all open $W$ there exists $\mathcal{U} \subseteq \clop(K)$ with $\vert \mathcal{U} \vert < \omega_1$ and $\overline{\bigcup \mathcal{U} }= \overline{W}$.

Having understood that, we note:
\begin{itemize} 
\item $\pi$ZD(2) conclude $\bigcup \mathcal{U} = W$.
This is equivalent to: $K$ is ZD.

\item SMP(2) conclude $\bigcup \mathcal{U} = W$.
This is equivalent to: $A$ has SMP and $YA$ is ZD.

\item $\pi$ZD(2) assume $\vert \mathcal{U} \vert < \omega_1$.
This is equivalent to: $K$ is ZD and each regular closed set is the closure of a cozero-set. This latter condition is called ``$K$ is fraction-dense" in \cite{HM93}, where appears an interesting and stubborn unsolved problem: Does fraction-dense imply strongly fraction dense? See \cite{HM93} for definitions and details.

\item $\pi$ZD(3) assume $\vert \mathcal{U} \vert< \omega_1$.
This is equivalent to [SMP(3) assume $\vert \mathcal{U} \vert < \omega_1$]. Thus the latter is, for $A \in \bf{W}$, just a topological property of $YA$ - call it $\mathscr{P}$ - which has ZD $\Rightarrow$ $\mathscr{P}$ $\Rightarrow$ $\pi$ZD.

Neither implication reverses: ZD $\not \Leftarrow \mathscr{P}$ is shown by $C(K)$ for $K$ a compactification of $\mathbb{N}$ with remainder $[0,1]$ (see \cite[Example 3.5.15]{E89}). And $\mathscr{P} \not \Leftarrow \pi$ZD is shown by $C(K)$ for $K$ Alexandroff's  double circumference. 

And, ($\mathscr{P}$ for $YA$) $\Leftrightarrow$ Each principal band of $A$ is generated (qua bands) by $< \omega_1$ projection elements of $A$. This property of $A$ is called ``super SMP" in \cite{L86}, where the second example above appears.
\end{itemize}
\end{blank}

Among other variations possible, one can replace the $\omega_1$'s above by any cardinal $\geq \omega_1$. We do not go there. 

We return to the main narrative.


\section{A very simple class of SMP's}\label{Sec4}

\begin{theorem}\label{4.1}
Suppose $A \in \bf{W}$.
\begin{itemize}
\item[(a)] If $p$ is an isolated point of $YA$, then $\{ p \} \in \lambda A$.
\item[(b)] If the set of isolated points of $YA$ is dense, then $A$ has SMP.
\end{itemize}
\end{theorem}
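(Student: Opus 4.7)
The plan is to handle part (a) directly from the defining property of $\lambda A$ together with the density requirement in $D(YA)$, and then derive part (b) immediately from part (a) via Theorem \ref{2.3}(a).

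\medskip

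\textbf{Part (a).} Suppose $p$ is an isolated point of $YA$, so $\{p\} \in \clop(YA)$. By the remark in \ref{1.4}(a) that $A$ $0$-$1$ separates disjoint closed sets, $\chi(\{p\}) \in A$. To show $\{p\} \in \lambda A$, I must verify that $a \cdot \chi(\{p\}) \in A$ for every $a \in A$. The key point is that $a(p) \in \mathbb{R}$: indeed, $a \in D(YA)$ means $a^{-1}(\mathbb{R})$ is dense in $YA$, and since $\{p\}$ is a nonempty open set, it must meet $a^{-1}(\mathbb{R})$, forcing $a(p) \in \mathbb{R}$. Consequently
\[
a \cdot \chi(\{p\}) = a(p)\,\chi(\{p\}),
\]
a real scalar multiple of $\chi(\{p\}) \in A$, hence an element of $A$. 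This gives $\{p\} \in \lambda A$.

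\medskip

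\textbf{Part (b).} By Theorem \ref{2.3}(a), it suffices to show $\lambda A$ is a $\pi$-base for $YA$. Let $W \subseteq YA$ be a nonempty open set. By the density hypothesis there exists an isolated point $p \in W$. By part (a), $\{p\} \in \lambda A$, and $\emptyset \neq \{p\} \subseteq W$, so $\lambda A$ is a $\pi$-base.

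\medskip

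There is no substantial obstacle; the only step requiring a moment's care is the observation that for an isolated point $p$ and any $a \in A \leq D(YA)$, the value $a(p)$ is automatically real, which is what allows the product $a\cdot\chi(\{p\})$ to remain in $A$ despite multiplication in $D(YA)$ being only partially defined.
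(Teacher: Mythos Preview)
Your proof is correct and follows essentially the same approach as the paper's own proof: for (a), use that $\{p\}$ is clopen so $\chi(\{p\})\in A$ and then $a\chi(\{p\})=a(p)\chi(\{p\})\in A$; for (b), conclude via Theorem~\ref{2.3}(a) that the isolated singletons form a $\pi$-base in $\lambda A$. Your explicit justification that $a(p)\in\mathbb{R}$ (because $a^{-1}(\mathbb{R})$ is dense and must meet the open set $\{p\}$) is a helpful clarification that the paper leaves implicit.
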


\begin{proof}
(a) Here, $\{ p \} \in \clop(YA)$, so $\chi(\{ p \}) \in A$, so for every $r \in \mathbb{R}$, any $r\chi(\{ p \}) \in A$ and $a \chi(\{ p \}) = a(p)\chi(\{ p \})$.

(b) Here, $\{ \{ p \} \mid p \mbox{ isolated} \}$ is a $\pi$-base. Apply \ref{2.3}(a).
\end{proof}

Here is an interesting instance of \ref{4.1}, illustrating some of the information in \ref{2.3}.

\begin{example}
(from \cite{HM96})

Define $A \leq C(\mathbb{N} \times \{ 0 , 1\}) = D(\beta \mathbb{N} \times \{ 0, 1\})$ as: $a \in A$ means $a(n,0) - a(n,1)$ is a bounded function of $n$. Here, $YA = \beta(\mathbb{N} \times \{ 0 ,1 \})$, in which the set of isolated points is $\mathbb{N} \times \{ 0 ,1 \}$. So \ref{4.1} says $A$ has SMP. The following can be shown easily.
\begin{itemize}
\item $\lambda A$ is the finite/cofinite Boolean algebra.
\item $A^* = C^*(\mathbb{N} \times \{ 0, 1 \})$.
\item $\loc A = C(\mathbb{N} \times \{ 0 , 1 \})$.
\end{itemize}
Note that $A$ is not PPP because $A$ is not local.
\end{example}


\section{Some SMP extensions of $A$; another simple class of SMP's}\label{Sec5}

\begin{blank}\label{5.1}
\underline{The method}.

We deploy a generalization of ideas in \cite{HM96}, \cite{HKM03}, \cite{HW24} (and earlier even; see these papers).

Take an $A \in \bf{W}$, and a cover $YA \xtwoheadleftarrow{\tau} X$ with $X$ ZD. (There are many of these noted in \ref{5.3}(c) below.) For $a \in A$, $\tilde{a} = a \circ \tau \in D(X)$, since $\tau$ is a cover. Now take $U \in \clop(X)$; then $\tilde{a}\chi(U) \in D(X)$. For two of these
\[
\tilde{a}_1\chi(U_1) + \tilde{a}_2\chi(U_2) = \tilde{a}_1\chi(U_1 - U_2) + \widetilde{(a_1+a_2)}\chi(U_1 \cap U_2) + \tilde{a}_2\chi(U_2 - U_1).
\]
Let $G = G(A,\tau)$ be the group generated by all these $\tilde{a}\chi(U)$ in $D(X)$.

Observe that any $g \in D(X)$ has $g \in G$ if and only if $g$ is a finite sum $\sum_{i=1}^n \tilde{a_i}\chi(U_i)$ with the $U_i$ disjoint (as with the sum of two, above).
\end{blank}

We now have the following.

\begin{theorem}\label{5.2}
The $G(A, \tau) \equiv G$ just defined is a vector lattice, thus $G \in \bf{W}$ with $YG = X$. And, $\lambda G = \clop(YG)$, so $G$ is local and SMP.
\end{theorem}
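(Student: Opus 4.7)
The plan is to verify the claims in the order they are stated: first show $G$ is a vector sublattice of $D(X)$, then identify its Yosida space, then compute $\lambda G$, and finally deduce SMP and locality from the results already established.

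The key technical input is the disjoint-sum normal form already noted in \ref{5.1}: every $g \in G$ can be written as $\sum_{i=1}^n \tilde{a}_i \chi(U_i)$ with the $U_i \in \clop(X)$ pairwise disjoint. To show $G$ is closed under $\vee$, $\wedge$, and scalar multiplication, I would take $g = \sum_{i=0}^m \tilde{a}_i \chi(U_i)$ and $h = \sum_{j=0}^n \tilde{b}_j \chi(V_j)$ in such disjoint form, where I pad with the complements $U_0 = X \setminus \bigsqcup_{i \geq 1} U_i$ and $V_0 = X \setminus \bigsqcup_{j \geq 1} V_j$ (taking $a_0 = b_0 = 0$). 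The family $\{U_i \cap V_j\}_{i,j}$ is then a disjoint clopen partition of $X$ on each piece of which $g$ and $h$ agree with $\tilde{a}_i$ and $\tilde{b}_j$ respectively. Since $\tilde{\cdot} : A \to D(X)$ preserves all lattice and linear operations (because $\tau$ is continuous and irreducible, so composition commutes with $+, \vee, \wedge$ wherever defined, extended by density of $\tau^{-1}(\mathbb{R}\text{-valued points})$), one gets $g \vee h = \sum_{i,j} \widetilde{(a_i \vee b_j)} \chi(U_i \cap V_j) \in G$, and similarly for $\wedge$ and $r \cdot g$ for $r \in \mathbb{R}$. So $G$ is a vector sublattice of $D(X)$.

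Next, I would identify the unit and the Yosida space. Taking $a = u_A$ and $U = X$ gives $\widetilde{u_A} \chi(X) = 1_X \in G$; this is a weak unit because $D(X)$ is archimedean with $1_X$ as weak unit and $G \leq D(X)$. Archimedeanness of $G$ is inherited from $D(X)$, so $(G, 1_X) \in \bf{W}$. For $YG = X$, I use that for every $U \in \clop(X)$ the function $\chi(U) = \widetilde{u_A}\chi(U) \in G$ has $\coz(\chi(U)) = U$; since $X$ is ZD the clopen sets form a base, hence $\coz(G)$ is a base for $X$. By the uniqueness of the Yosida representation, this forces $YG = X$ and the given $G \leq D(X)$ to be the Yosida embedding.

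For the last assertions, the disjoint-sum form makes $\lambda G = \clop(X)$ transparent: given $U \in \clop(X)$ and $g = \sum \tilde{a}_i \chi(U_i) \in G$, one has $g \chi(U) = \sum \tilde{a}_i \chi(U_i \cap U) \in G$ since the $U_i \cap U$ are still disjoint clopen sets; hence $U \in \lambda G$, and the reverse inclusion $\lambda G \subseteq \clop(YG)$ is automatic. Since $YG = X$ is ZD (in particular $\pi$ZD), \ref{2.2}(b) now gives that $G$ is local, and \ref{2.3}(c) gives that $G$ has SMP.

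The main obstacle I anticipate is the verification that the disjoint-sum representation is stable under the lattice operations; specifically, justifying that $\widetilde{a \vee b} = \tilde a \vee \tilde b$ in $D(X)$ (and similarly for $+$) requires care because addition in $D(X)$ is only partially defined, but the fact that $\tau$ is irreducible (so $\tau^{-1}(\mathbb{R}\text{-points of }a)$ is dense in $X$ for each $a \in A$) is exactly what is needed. Once this is in hand, everything else follows by bookkeeping on the partition $\{U_i \cap V_j\}$.
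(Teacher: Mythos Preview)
Your proposal is correct and follows essentially the same route as the paper: both arguments rest on the disjoint-sum normal form from \ref{5.1}, then verify lattice closure termwise, identify $YG = X$ via the characteristic functions $\chi(U)$, and compute $\lambda G = \clop(X)$ by the obvious $g\chi(U) = \sum \tilde{a}_i\chi(U_i \cap U)$, finally appealing to \ref{2.2}(b) and \ref{2.3}. The only difference is cosmetic: the paper invokes the standard reduction (\cite[Theorem~6.1]{D95}) that a subgroup is a sublattice once $g \vee 0 \in G$ for every $g$, which on a disjoint sum $\sum \tilde{a}_i\chi(U_i)$ factors immediately as $\sum \widetilde{(a_i \vee 0)}\chi(U_i)$---so no common refinement of two partitions is needed and your anticipated ``main obstacle'' about $\widetilde{a \vee b}$ simply does not arise.
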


\begin{proof}
$G$ is a vector lattice: It suffices that $g \in G$ implies $g \vee 0 \in G$ (\cite[Theorem 6.1]{D95}). 

First, take $g = (a\circ \tau)\chi(U)$. Then $g \vee 0 = [(a \vee 0) \circ \tau]\chi(U) \in G$ since $a \vee 0 \in A$.

Second, if $g$ is a finite sum $\sum_{i=1}^n a_i\chi(U_i)$ with the $U_i$ disjoint, then $g \vee 0 = \sum_{i=1}^n [\tilde{a}_i\chi(U_i) \vee 0]$ and each $\tilde{a}_i\chi(U) \vee 0  \in A$ by the above.

So $G \in \bf{W}$ and $YG = X$ (most simply, since all $\chi(U) \in G$ and these separate points of $X$ by ZD).

Now $\lambda G \equiv \{ U \in \clop(X) \mid g\chi(U) \in G \, \, \forall g \in G \}$. We have $g = \sum_{i =1}^n (a_i \circ \tau)\chi(U_i)$, where the terms are pairwise disjoint, and $g\chi(U) = \sum_{i=1}^n(a_i \circ \tau)\chi(U_i \cap U) \in G$ for every $U \in \clop(YG)$. So $\lambda G = \clop(YG) = \clop(X)$, and since $X$ is ZD, $G$ is local and SMP by \ref{2.3}.
\end{proof}

\begin{blank}\label{5.3}
\underline{Particular cases of $G(A,\tau)$}.

(a) If $A = A^*$, then in constructing $G(A,\tau)$, we could use any continuous surjection $YA \xtwoheadleftarrow{\tau} X$ with $X$ ZD. This makes $G(A,\tau)$ a case of \ref{2.3}(b) and (c), and will be ignored.

(b) If $YA$ is already ZD, we could use $YA \xleftarrow{\tau} YA$ the identity. This makes $G(A,\tau) = \loc A$ (the local reflection), and ``exhibits" \ref{2.3}(d).

(c) For general $A \in \bf{W}$, there are many ZD covers $YA \xtwoheadleftarrow{\tau} X$, thus many essential extensions to SMP-objects $A \leq G(A, \tau)$. The projectable (PPP, per \cite[Section 2]{AB78}; also, \cite[Section 25]{LZ71}) hull $A \leq pA$ is such a situation $YA \xtwoheadleftarrow{\tau} YpA$, and then $G(A, \tau) = pA$ as shown in \cite{HKM03}. This is generalized to more examples using other hulls in \cite{HW24}.
\end{blank}


\section{A lemma toward more examples}\label{Sec6}

The following will be used for subsequent examples; the purpose is to reduce computations.

Let $X$ be any compact space, and $\mathcal{Z}$ a disjoint family of nonempty nowhere dense zero-sets in $X$ ($\mathcal{Z}$'s of various nature will be used later). For each $Z \in \mathcal{Z}$, let $f_Z \in D(X)$ have $f_Z \geq 1$ and $f_Z^{-1}(+\infty) = Z$. Let $S$ be the linear span of $\{ f_Z \mid Z \in \mathcal{Z} \}$, i.e., $s \in S$ means $s= \sum_{i \in I} r_if_{Z_i}$ ($r_i \in \mathbb{R}$) for some finite $\{ Z_i \}_{i \in I} \subseteq \mathcal{Z}$. (Note. In such a sum, we can assume the $Z$'s are all distinct and therefore disjoint by collecting and summing the $f_Z$'s with the same $Z$, and adjusting the $r$'s (all $\neq 0$).)

It is convenient to write such $s$ as $s = \sum_{i \in I} r_i f_i$ with $f_i$ standing for $f_{Z_i}$. 

Now let $A = A(X, \mathcal{Z}) \equiv C(X) + S$. One sees that $A$ is a group in $D(X)$ that is closed under multiplication by real scalars (e.g., check that $g + f_Z \in D(X)$ whever $g \in C(X)$). Our point is this.

\begin{theorem}\label{6.1}
$A$ is a vector lattice. Thus, $A \in \bf{W}$ with $YA = X$.
\end{theorem}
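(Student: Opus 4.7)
The plan is to invoke \cite[Theorem 6.1]{D95} exactly as in the proof of \ref{5.2}: since $A$ is already a subgroup of $D(X)$ closed under real scalar multiplication, it suffices to show $a \in A$ implies $a \vee 0 \in A$. Once $A$ is a vector lattice, $A \in \bf{W}$ is immediate, because $A \subseteq D(X)$ is archimedean and $1 \in C(X) \subseteq A$ is a weak unit, while $\coz(C(X)) \subseteq \coz(A)$ is already an open base for $X$, identifying $YA$ with $X$ via the Yosida representation.

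Write $a = g + s$ with $g \in C(X)$ and $s = \sum_{i \in I} r_i f_i$, where $r_i \neq 0$ and the $Z_i$ are pairwise disjoint. Partition $I = I^+ \sqcup I^-$ by the sign of $r_i$, set $s^{\pm} = \sum_{i \in I^{\pm}} r_i f_i$, $Y^{\pm} = \bigcup_{i \in I^{\pm}} Z_i$, and $Y = Y^+ \cup Y^-$. Because the $Z_i$ are pairwise disjoint, $s^+$ is real-valued continuous on $X \setminus Y^+$ and equals $+\infty$ on $Y^+$, symmetrically for $s^-$, while $Y^+ \cap Y^- = \emptyset$. Hence the pointwise values of $a \vee 0$ are unambiguous: $+\infty$ on $Y^+$, $0$ on $Y^-$, and the finite continuous $\max(g+s, 0)$ on $X \setminus Y$.

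I would then take $t = s^+ \in S$ and produce $h \in C(X)$ with $a \vee 0 = h + s^+$; the candidate is
\[
h(x) = \begin{cases} \max(g(x) + s(x), 0) - s^+(x) & \text{if } x \in X \setminus Y, \\ g(x) + s^-(x) & \text{if } x \in Y^+, \\ -s^+(x) & \text{if } x \in Y^-. \end{cases}
\]
Granted $h \in C(X)$, the identity $a \vee 0 = h + s^+$ is point-by-point routine: on $Y^-$, $s^+$ is finite and $h + s^+ = 0$; on $Y^+$, $h$ is finite and $s^+ = +\infty$, so $h + s^+ = +\infty$; on $X \setminus Y$ the two $s^+$ terms cancel, leaving $\max(a,0)$. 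So everything reduces to proving $h \in C(X)$.

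The crux is continuity of $h$ at points of $Y$. Fix $x \in Z_i \subseteq Y^+$. Since the $Z_j$ are closed and pairwise disjoint, choose an open neighborhood $V$ of $x$ disjoint from every $Z_j$ with $j \neq i$; on $V$ each such $f_j$ is real-valued continuous, while $f_i$ is continuous into $\mathbb{R} \cup \{+\infty\}$ with $f_i(x) = +\infty$. Since $r_i > 0$, after shrinking $V$ we may assume $a > 0$ on $V \setminus Z_i$, so on that set $h = a - s^+ = g + s^-$. But $g + s^-$ is continuous throughout $V$ (each $f_j$ with $j \in I^-$ is real-valued continuous on $V$ because $V \cap Z_j = \emptyset$), and its value at $x$ coincides with the defined $h(x) = g(x) + s^-(x)$; thus $h$ equals the continuous function $g + s^-$ on all of $V$. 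The case $x \in Y^-$ is symmetric, with $a < 0$ near $x$ forcing $h = -s^+$ locally. This disjointness-driven local reduction is the main obstacle; everything else is bookkeeping.
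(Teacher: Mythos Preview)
Your proof is correct and shares the paper's overall strategy: partition $I$ by the sign of $r_i$, observe that near each $Z_i$ the sign of $a$ is forced by $r_i$, and conclude that $a\vee 0$ differs from $s^+=\sum_{i\in I^+} r_i f_i$ by an element of $C(X)$.

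The execution differs. The paper chooses disjoint open $U_i\supseteq Z_i$ on which $a$ has the correct sign, builds a partition of unity $\{\rho_V\}\cup\{\rho_i\}$ subordinate to $\{V\}\cup\{U_i\}$ with $V=X\setminus\bigcup Z_i$, and then expands $a\vee 0=(a\vee 0)\rho_V+\sum_i(a\vee 0)\rho_i$ algebraically until each summand visibly lies in $C(X)$ or $S$. You instead write down the candidate $h=(a\vee 0)-s^+$ explicitly as a piecewise formula and verify $h\in C(X)$ by a direct local continuity check at each point of $Y^+\cup Y^-$. Your route is shorter and avoids the partition-of-unity machinery; the paper's route makes the ``$C(X)$'' verification automatic (products like $r_jf_j\rho_i$ with $j\neq i$ and $r_if_i\rho_V$ are manifestly continuous because $\rho_i,\rho_V$ have support missing the relevant $Z$). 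Both hinge on exactly the same disjointness-and-sign observation you flagged as the crux.
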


\begin{proof}
$a \in A$ means $a = g+s$ with $s \in S$ and $g \in C(X)$, and equivalently, $a \in D(X)$, with an $s \in S$ such that $a - s$ is bounded. 

It suffices to show that each $a \in A$, $a \vee 0 \in A$, according to \cite[Theorem 6.1]{D95}.

Let $a = g + \sum_{i \in I} r_i f_i$ with $g \in C(X)$, each $r_i \neq 0$, and the $Z_i = f_i^{-1}(+\infty)$ disjoint (as discussed above).
Let $I = P \cup N$, where $P = \{ i \in I \mid r_i > 0 \}$ and $N = \{ i \in I \mid r_i < 0 \}$. 
Since $X$ is compact Hausdorff, and hence normal, there are disjoint open $\{ U_i \}_{i \in I}$ such that $U_i \supseteq Z_i$. By shrinking $U_i$ as needed, one may assume that $a > 0$ on $U_i$ when $i \in P$ and $a < 0$ on $U_i$ when $i \in N$ (e.g., if $i \in P$, then for each $x \in Z_i$, one may can choose an open $W_x$ such that $x \in W_x$ and $a > 0$ on $W_x$, and then redefine $U_i$ to be $(\bigcup_{x \in Z_i} W_x ) \cap U_i$).  Let $U = \bigcup_{i \in I} U_i$ and $U' = X-U$. 

Let $V = X - (\bigcup_{i \in I} Z_i)$. Then $\{ V \} \cup \{ U_i \}_{i \in I}$ is an open cover of $X$. Since $X$ is compact, and hence normal, there is a partition of unity $\{ \rho_V \} \cup \{ \rho_i \}_{i \in I} \subseteq C(X)$  dominated by that cover, i.e., $\overline{\coz(\rho_V)} \subseteq V$, $\overline{\coz(\rho_i)} \subseteq U_i$ for $i \in I$, and $1 = \rho_V + \sum_{i \in I} \rho_i$ (see, e.g., \cite[Theorem 36.1]{Munkres}). 
Note that $r_jf_j \rho_i \in C(X)$ whenever $j \neq i$; and that $r_if_i \rho_V \in C(X)$.

We show that $a \vee 0 \in A$ if $\sum_{i \in I} (a \vee 0 )\rho_i \in A$. Now 
\[
a \vee 0 = (a \vee 0)\rho_V + \sum_{i \in I}(a \vee 0)\rho_i \hspace{.2in} \mbox{(pointwise in $D(X)$)} 
\]
From the choice of $V$, we know $a$ is finite-valued on $V$, so too $a \vee 0$. Since $\rho_V \in C(X) \leq A$ and $\overline{\coz(\rho_V)} \subseteq V$, it follows that $(a\vee 0)\rho_V \in C(X) \leq A$. Therefore, if $\sum_{i \in I}(a \vee 0)\rho_i \in A$, so is $a \vee 0$.

By the choice of $U_i$, on $\coz(\rho_i)$ we have $(a \vee 0)\rho_i = a\rho_i$ or $(a \vee 0)\rho_i = 0$ depending on whether $i \in P$ or $i \in N$, respectively.
Thus:
\begin{align*}
\sum_{i \in I}(a \vee 0)\rho_i &= \sum_{i \in N} (a \vee 0)\rho_i + \sum_{i \in P} (a \vee 0)\rho_i =\sum_{i \in P} a\rho_i \\
\end{align*}
Now
\[
\sum_{i \in P} a\rho_i = \sum_{i \in P} (g + \sum_{j \in I} r_jf_j)\rho_i = \sum_{i \in P} g\rho_i + \sum_{i \in P} \sum_{j \neq i} r_jf_j\rho_i +\sum_{i \in P} r_if_i\rho_i  
\]
So, using that 
\[
r_if_i = r_if_i \left (\rho_V + \sum_{i \in I} \rho_i\right )= r_if_i\rho_V + \sum_{i \in I} r_if_i\rho_i
\]
one concludes:
\begin{align*}
\sum_{i \in I}(a \vee 0)\rho_i  &= \sum_{i \in P} g\rho_i +\sum_{i \in P} \sum_{j \neq i} r_jf_j\rho_i + \sum_{i \in P} \left [ r_if_i - r_if_i \rho_V \right ]     \\
&= \underbrace{\sum_{i \in P} g\rho_i +\sum_{i \in P}\sum_{j \neq i} r_jf_j\rho_i - \sum_{i \in P} r_if_i\rho_V}_{\in C(X)}+ \underbrace{\sum_{i \in P} r_if_i}_{\in S} 
\end{align*}
Hence $\sum_{i \in I}(a \vee 0)\rho_i \in A$ as desired.
\end{proof}


\section{An example of $\pi$ZD not SMP}\label{Sec7}

We have an $A \in \bf{W}$ with these features; in fact, $YA$ will be ZD. This $A$ will be of the form $A(X,\mathcal{Z})$, per Section \ref{Sec6}.

Let $X$ be the Cantor set. (We note below general features of $X$ which serve this work.) Take any clopen $Y \neq \emptyset, X$, let $Z = X - Y$, and take $\{ y_n \}_{n \in \mathbb{N}}$ and $\{ z_n \}_{n \in \mathbb{N}}$ countable dense sets of $Y$ and $Z$, respectively. Our $\mathcal{Z}$ will be $\{ \{y_n, z_n \} \mid n \in \mathbb{N} \}$. So, for each $n$, take $f_n \in D(X)$ with $f_n \geq 1$ and $f_n^{-1}(+\infty) = \{ y_n, z_n \}$, then $S$ is all finite sums $\sum_{i \in I} r_i f_i$ ($r_i \in \mathbb{R}$), and $A(X,\mathcal{Z}) \equiv C(X) + S$.

\begin{theorem}\label{7.1}
(examples) $A = A(X, \mathcal{Z}) \in \bf{W}$ has $YA = X$, thus ZD, and fails SMP.
\end{theorem}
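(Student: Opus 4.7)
The first half is immediate: Theorem \ref{6.1} yields $A \in \mathbf{W}$ with $YA = X$, and since $X$ is the Cantor set, $YA$ is zero-dimensional, in particular $\pi$ZD. To show $A$ fails SMP, by Theorem \ref{2.3}(a) it suffices to exhibit a nonempty open subset of $X$ containing no nonempty element of $\lambda A$. The plan is that the chosen clopen $Y$ itself is such a subset.

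The technical core is the following necessary condition on $U \in \lambda A$: for every $n$, either $\{y_n, z_n\} \subseteq U$ or $\{y_n, z_n\} \cap U = \emptyset$. To prove it, fix $U \in \lambda A$, so $f_n \chi(U) \in A$. Because $U$ is clopen, $f_n \chi(U) \in D(X)$ takes the value $+\infty$ exactly on $\{y_n, z_n\} \cap U$ and is finite elsewhere. Write $f_n\chi(U) = g + \sum_{i \in I} r_i f_i$ with $g \in C(X)$, each $r_i \neq 0$, and the $Z_i$'s pairwise disjoint (as allowed by the parenthetical remark in Section \ref{Sec6}). For each index $m$, the only summand of $\sum r_i f_i$ that can be infinite at $y_m$ (or at $z_m$) is $r_m f_m$; and since $f_m \geq 1$ at both $y_m$ and $z_m$, the term $r_m f_m$ takes the same sign of infinity at $y_m$ as at $z_m$. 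Thus $\sum r_i f_i$ is $+\infty$ at $y_m$ iff it is $+\infty$ at $z_m$, similarly for $-\infty$, and adding the real-valued $g$ preserves this equivalence. Applied at $m = n$, this forces $f_n \chi(U)$ to be infinite at $y_n$ iff infinite at $z_n$, which is the desired dichotomy.

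Failure of the $\pi$-base property is then immediate. Suppose $\emptyset \neq U \in \lambda A$ and $U \subseteq Y$. Density of $\{y_n\}$ in $Y$ forces some $y_n \in U$, while $z_n \in Z = X - Y$ gives $z_n \notin U$, contradicting the dichotomy. Hence no nonempty element of $\lambda A$ is contained in $Y$, so $\lambda A$ is not a $\pi$-base in $YA$, and Theorem \ref{2.3}(a) concludes that $A$ is not SMP.

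The main obstacle is the pole-tracking step above: one must carefully handle the partial multiplication in $D(X)$ to see that $f_n\chi(U)$ is genuinely an element of $D(X)$ with the claimed polar structure (clopenness of $U$ is essential here), and then extract from the identity $f_n\chi(U) = g + \sum r_i f_i$ the paired constraint rather than merely the easier consequence that $r_m = 0$ for $m \neq n$.
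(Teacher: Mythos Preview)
Your proof is correct and essentially the same as the paper's: both pick $y_n$ in a hypothetical ``good'' clopen $U \subseteq Y$ and derive a contradiction from the pole-pairing of elements of $A$ at $\{y_n,z_n\}$. The paper frames this via projection bands (decomposing $f_n = k_1 + k_2$ with $\coz(k_1)\subseteq U$, $\coz(k_2)\subseteq X\setminus U$) rather than via $\lambda A$ and Theorem~\ref{2.3}(a), and leaves implicit the pole-tracking step (``$k_2(z_n)=+\infty$ implies $k_2(y_n)=+\infty$'') that you spell out explicitly.
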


\begin{proof}
Using \ref{6.1}, it suffices to show ``fails SMP". We show that the band $\band(Y) \equiv \{ a \in A \mid \coz(a) \subseteq Y \}$ (see \ref{1.5}) contains no nonzero projection band.

First, $\band(Y) \subseteq C(X)$: if $a = g + \sum_{i \in I} r_i f_i$ ($g \in C(X)$) has $\coz(a) \subseteq Y$, then $\coz(f_i) \subseteq Y$ (which contradicts the choice of $f_i$), and therefore $r_i=0$, for each $i \in I$; so $a = g$.

Now, suppose $C \subseteq \band(Y)$ is a nonzero projection band. So $C = \chi(U)^{dd}$ for some $U \in \clop(X)$ with $\emptyset \neq U \subseteq Y$, and some $y_n \in U$. For that $n$, consider $f_n$, and, since $C$ is a projection band, write $f_n = k_1 + k_2$ with $\coz(k_1) \subseteq U \subseteq Y$ (thus $k_1 \in C(X)$) and $\coz(k_2) \subseteq X - U$. Then $k_2(z_n) =f_n(z_n) = +\infty$, which implies $k_2(y_n) = +\infty$, which contradicts $\coz(k_2) \subseteq X - U$. 
\end{proof}

The argument above is valid using any compact $X$ which is $\pi$ZD, has a clopen set $Y$ with a countable dense set of $G_{\delta}$-points (the $\{ y_n \}_{n \in \mathbb{N}}$), with $X - Y = Z$ containing infinitely many $G_{\delta}$-points (the $\{ z_n \}_{n \in \mathbb{N}})$.


\section{Another class of examples, SMP and not}\label{Sec8}

We continue in the setting of Section \ref{Sec6}, with, as there, $A = A(X, \mathcal{Z}) = C(X) + S$. We exhibit, first (\ref{8.1}) some of these that have SMP, and several (\ref{8.3}) some of which have $X$ ZD but $\lambda A = \{ \emptyset, YA \}$, thus failing SMP (as strongly as possible; cf. \ref{7.1}).

\begin{theorem}\label{8.1}
If $X$ is $\pi$ZD and $\bigcup \mathcal{Z}$ is nowhere dense (e.g., $\mathcal{Z}$ is finite), then $A(X, \mathcal{Z})$ has SMP.
\end{theorem}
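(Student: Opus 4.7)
By Theorem \ref{2.3}(a), it suffices to show that $\lambda A$ is a $\pi$-base for $YA = X$. So given a nonempty open $W \subseteq X$, the plan is to produce a nonempty clopen $U \subseteq W$ that belongs to $\lambda A$.

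First I would use the hypothesis that $\bigcup \mathcal{Z}$ is nowhere dense: $W \setminus \overline{\bigcup \mathcal{Z}}$ is then a nonempty open subset of $W$. Applying $\pi$ZD of $X$, pick a nonempty clopen $U \subseteq W \setminus \overline{\bigcup \mathcal{Z}}$. In particular $U$ is disjoint from every $Z \in \mathcal{Z}$.

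Next I would verify $U \in \lambda A$, i.e., $a\chi(U) \in A$ for every $a \in A$. Write $a = g + \sum_{i \in I} r_i f_i$ with $g \in C(X)$ and $Z_i = f_i^{-1}(+\infty) \in \mathcal{Z}$. Then $a\chi(U) = g\chi(U) + \sum_i r_i f_i\chi(U)$, and the claim reduces to showing each summand lies in $C(X)$. Since $U$ is clopen, $g\chi(U)$ is clearly in $C(X)$. For $f_i\chi(U)$: as $U \cap Z_i = \emptyset$ and $f_i \geq 1$, the restriction $f_i|_U$ is a continuous map from compact $U$ into $\mathbb{R}$, hence bounded; since $U$ is clopen, extending by $0$ off $U$ produces a function in $C(X)$. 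Thus $a\chi(U) \in C(X) \subseteq A$, and $U \in \lambda A$.

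The main obstacle I expect is the bookkeeping around why $f_i \chi(U)$ is genuinely in $C(X)$ rather than merely in $D(X)$ — the combination ``$U$ clopen and $\overline{U} \cap Z_i = \emptyset$'' is what lets us avoid the pole of $f_i$ inside $\overline{U}$ and use compactness of $U$ to bound $f_i|_U$. Once that point is clean, the rest is immediate from the decomposition $A = C(X) + S$ and the $\pi$ZD hypothesis used to choose $U$.
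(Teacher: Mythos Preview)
Your proof is correct and follows essentially the same approach as the paper: remove $\overline{\bigcup\mathcal{Z}}$ from $W$, use $\pi$ZD to find a nonempty clopen $U$ inside, observe that each $f_Z$ is bounded on $U$, and conclude $U\in\lambda A$. Your write-up simply supplies more detail on why $f_i\chi(U)\in C(X)$ than the paper does.
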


\begin{proof}
Given open $W \neq \emptyset$, $W - \overline{\bigcup \mathcal{Z}}$ is open, nonempty (since $\bigcup \mathcal{Z}$ is nowhere dense), and then contains clopen $U \neq \emptyset$ (since $X$ is $\pi$ZD), which has all $f_Z$'s bounded on $U$, and therefore $a\chi(U) \in A$ for all $a \in A$.

By \ref{2.3}, $A(X,\mathcal{Z})$ has SMP.
\end{proof}

Since ``local" is tied up with SMP, we note the following. This shows the construct in \ref{8.1} need not be local (hence not PPP).

\begin{theorem}\label{8.2}
Suppose $X$ is ZD. $A(X, \mathcal{Z})$ is local if and only if $\vert Z \vert = 1$ for every $Z \in \mathcal{Z}$. 
\end{theorem}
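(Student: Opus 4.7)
The plan is to use Remark 2.2(b): since $X = YA$ is ZD, $A$ is local if and only if $a\chi(U) \in A$ for every $a \in A$ and every $U \in \clop(X)$. Both directions of the biconditional will be established against this reformulation, and the disjointness of $\mathcal{Z}$ (together with the uniqueness of a normal-form decomposition $a = g + \sum_{i} r_i f_{Z_i}$ with distinct $Z_i$ and nonzero $r_i$) will be the key structural fact used.

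For the ($\Leftarrow$) direction, assume each $Z \in \mathcal{Z}$ is a singleton $\{z_Z\}$. Given $a = g + \sum_{i \in I} r_i f_{Z_i} \in A$ and $U \in \clop(X)$, I would write
\[
a\chi(U) = g\chi(U) + \sum_{i \in I} r_i \, f_{Z_i}\chi(U),
\]
and argue termwise. Since $U$ is clopen, $g\chi(U) \in C(X)$. For each $i$, consider the two cases: if $z_{Z_i} \notin U$, then $f_{Z_i}\chi(U)$ is bounded and continuous on the clopen set $U$ and zero off it, hence in $C(X)$; if $z_{Z_i} \in U$, then $f_{Z_i}\chi(X \setminus U)$ is finite on the clopen $X \setminus U$ (as the pole $z_{Z_i}$ lies in $U$), hence in $C(X)$, and so $f_{Z_i}\chi(U) = f_{Z_i} - f_{Z_i}\chi(X \setminus U) \in A$. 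Summing gives $a\chi(U) \in A$, so $A$ is local.

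For the ($\Rightarrow$) direction I would argue the contrapositive. Suppose some $Z_0 \in \mathcal{Z}$ contains distinct points $p \neq q$. Using that $X$ is compact Hausdorff and ZD, pick $U \in \clop(X)$ with $p \in U$ and $q \notin U$; since $U$ is clopen, $f_{Z_0}\chi(U) \in D(X)$, taking value $+\infty$ at $p$ and $0$ at $q$. Suppose for contradiction $f_{Z_0}\chi(U) = g + \sum_{j} r_j f_{Z_j} \in A$, normalized so the $Z_j$ are distinct (hence disjoint) and the $r_j$ are nonzero. Evaluating at $p$, the LHS is $+\infty$, while the RHS is finite unless some $f_{Z_j}$ has a pole at $p$; by disjointness of $\mathcal{Z}$ the only $Z \in \mathcal{Z}$ containing $p$ is $Z_0$, so there is a unique index $j_0$ with $Z_{j_0} = Z_0$, and $r_{j_0} > 0$ is forced. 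But then at $q \in Z_0 \setminus U$ the LHS is $0$, whereas the RHS equals $g(q) + r_{j_0}\,(+\infty) + (\text{finite})  = +\infty$, a contradiction. Thus $f_{Z_0}\chi(U) \notin A$, so $A$ is not local by 2.2(b).

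The main delicate point is the pointwise bookkeeping in $D(X)$: one must verify that $a\chi(U)$ is genuinely a well-defined element of $D(X)$ (which is clean because $U$ is clopen and $\chi(U) \in C(X)$), and must track $\pm\infty$ values carefully through the normal-form expression. No other obstacle arises, because all multiplications and additions that appear are forced to be total by the clopen hypothesis and the disjointness of $\mathcal{Z}$.
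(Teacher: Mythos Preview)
Your proof is correct and follows essentially the same route as the paper's. Both directions hinge on 2.2(b) and the termwise analysis of $a\chi(U)$; your $(\Leftarrow)$ argument is identical in structure (split the sum over whether the single pole $z_{Z_i}$ lies in $U$ or not, and write $f_{Z_i}\chi(U)=f_{Z_i}-f_{Z_i}\chi(X\setminus U)$ in the latter case), and your $(\Rightarrow)$ argument simply makes explicit what the paper asserts in one line---namely that $a=f_{Z_0}$ witnesses $a\chi(U)\notin A$, verified by evaluating the normal form at $p$ and then at $q$.
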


\begin{proof}
If some $\vert Z \vert > 1$, as $x,y \in Z$ with $x \neq y$, then clopen $U$ with $x \in U$ and $y \notin U$ will have $a\chi(U) \notin A$.

For convenience, write a faithful indexing $\mathcal{Z} = \{ Z_i \}_{i \in I}$, and each $f_{Z_i} = f_i$, and suppose $Z_i = \{ p_i \}$ for each $i \in I$. Given clopen $U$ and $a \in A$, we have $a = g + \sum_{i \in I} r_if_i$ with $g \in C(X)$, $r_i = 0$ for all but finitely many $i \in I$, and 
\[
a\chi(U) = g\chi(U) + \sum\{ r_if_i\chi(U) \mid p_i \notin U \} + \sum \{ r_if_i\chi(U) \mid p_i \in U \}.
\]
The first two terms are bounded, and the third is of the form $s-h$ for some $s \in S$ and some bounded $h$. So $a\chi(U) \in A$, and $A$ is local by \ref{2.2}(b).
\end{proof}

The following is our ``extreme version" of $A \in \bf{W}$ failing SMP with $YA$ ZD. The ``extreme" seems to require complications on top of the general construct in Section \ref{Sec6}.

First, take $A(X,\mathcal{Z}) = C(X) + S$ exactly as in Section \ref{Sec6}. Then, take any cover $X \xtwoheadleftarrow{\tau} Y$ with $Y$ ZD. Any covering map involved preserves nowhere density, so that $\tau^{-1}(Z)$ ($Z \in \mathcal{Z}$) is a nowhere dense zero-set, and thus $\tilde{f}_Z \equiv f_Z \circ \tau \in D(Y)$. ``On $Y$" now, let $\tilde{\mathcal{Z}} \equiv \{ \tau^{-1}(Z) \mid Z \in \mathcal{Z} \}$ and $\tilde{S} \equiv \{ \tau^{-1}(s) \mid s \in S \}$ (this is the linear span of $\{ \tilde{f}_Z \mid Z \in \mathcal{Z} \}$). Now, \ref{6.1} applies to $A(Y,\tilde{\mathcal{Z}}) = C(Y) + \tilde{S}$, which is a $\bf{W}$-object with Yosida space $Y$.

\begin{theorem}\label{8.3}
In the construction above, suppose that
\begin{itemize}
\item $X$ is connected, $\vert X \vert > 1$, and every point is a $G_{\delta}$ (thus a nowhere dense zero-set). E.g., $X = [0,1]$.
\item Suppose $Y$ is ZD. E.g., $Y =gX$ (see \ref{1.3}(c)). 
\end{itemize}
Let $\mathcal{Z} = \{ \{p \} \mid p \in X \}$. Then, $A = A(Y,\tilde{\mathcal{Z}})$ has $YA = Y$, which is ZD, and $\lambda A = \{ \emptyset, Y \}$, thus only the projection bands $\{ 0 \}, A$, and fails SMP (drastically; cf. \ref{7.1}).
\end{theorem}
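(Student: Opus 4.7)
By Theorem~\ref{6.1} applied to $Y$ with the disjoint family $\tilde{\mathcal{Z}} = \{\tau^{-1}(\{p\}) : p \in X\}$ of nowhere dense zero-sets, $A = A(Y, \tilde{\mathcal{Z}}) \in \bf{W}$ with $YA = Y$, and $Y$ is ZD by hypothesis. It remains to show $\lambda A = \{\emptyset, Y\}$: then the correspondence $U \leftrightarrow \chi(U)^{dd}$ of \ref{1.5} gives only the trivial projection bands $\{0\}$ and $A$, and Theorem~\ref{2.3} confirms the drastic failure of SMP.

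The engine is a ``saturation principle'': every $a \in A = C(Y) + \tilde{S}$ admits a unique decomposition $a = g + \sum_{i=1}^{n} r_i \tilde{f}_{p_i}$ with $g \in C(Y)$, $r_i \neq 0$, and $p_i \in X$ distinct, and then $\{y \in Y : a(y) = \pm\infty\} = \bigsqcup_{i=1}^{n} \tau^{-1}(\{p_i\})$. This holds because on each fiber $\tau^{-1}(\{p_{i_0}\})$ exactly one summand $r_{i_0}\tilde{f}_{p_{i_0}}$ is infinite while all other terms stay finite, so no $\infty - \infty$ cancellation occurs and the infinity set of $a$ is $\tau$-saturated (a union of whole fibers). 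Now suppose $U \in \clop(Y)$ with $\emptyset \neq U \neq Y$. The nonempty closed sets $\tau(U)$ and $\tau(Y - U)$ cover $X$; were they disjoint, they would split $X$ into two nonempty clopens, contradicting the connectedness of $X$. Pick $p \in \tau(U) \cap \tau(Y - U)$, so the fiber $\tau^{-1}(\{p\})$ meets both $U$ and $Y - U$. The product $\tilde{f}_p \chi(U)$, defined as $\tilde{f}_p$ on $U$ and $0$ on $Y - U$, lies in $D(Y)$ (continuity holds because $U$ is clopen, and density of finite-valued points follows from $\tau^{-1}(\{p\})$ being nowhere dense) and has infinity set $\tau^{-1}(\{p\}) \cap U$, a nonempty proper subset of the fiber $\tau^{-1}(\{p\})$ --- in particular not $\tau$-saturated. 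By the saturation principle, $\tilde{f}_p \chi(U) \notin A$, so $U \notin \lambda A$.

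The main obstacle is making the saturation principle precise, which requires checking uniqueness of the decomposition $a = g + \sum r_i \tilde{f}_{p_i}$ (equivalently $C(Y) \cap \tilde{S} = \{0\}$, which follows since any nonzero element of $\tilde{S}$ attains $\pm \infty$ on some $\tau^{-1}(\{p_i\})$) and keeping track of the partial arithmetic in $D(Y)$ fiber by fiber. Once that bookkeeping is in place, the topological step --- using connectedness of $X$ to produce the ``bad'' fiber --- and the final contradiction with $\tau$-saturation are both short.
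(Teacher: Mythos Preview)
Your proof is correct and follows essentially the same route as the paper: use connectedness of $X$ to find $p\in\tau(U)\cap\tau(Y\setminus U)$, then show $\tilde f_p\chi(U)\notin A$ because its infinity set is only part of the fiber $\tau^{-1}(\{p\})$. The paper's version is slightly leaner---it does not isolate a ``saturation principle'' or argue uniqueness of decomposition, but simply observes that if $\tilde f_p\chi(U)=h+\tilde s$ with $h\in C(Y)$, then $\tilde s(y)=+\infty$ forces $\tilde s(y')=+\infty$ for any $y'$ in the same fiber (since $\tilde s=s\circ\tau$), yielding the contradiction directly.
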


\begin{proof}
It suffices to show that $\lambda A = \{ \emptyset, Y \}$, i.e., for $U \in \clop(Y)$ with $\emptyset \neq U \neq Y$, there is $a \in A$ with $a\chi(U) \notin A$.

Given such $U$, let $U' = Y - U$. Then, $\tau(U)$ and $\tau(U')$ are closed (since $\tau$ is continuous and $U,U'$ are compact), $X = \tau(U) \cup \tau(U')$ (since $\tau$ is surjective), so there is $p_0 \in \tau(U) \cap \tau(U')$ (since $X$ is connected). With $\{ p_0 \} = Z \in \mathcal{Z}$, let $a = \tilde{f}_Z= f_Z \circ \tau$. There are $y \in U$ and $y' \in U'$ with $\tau(y) = p_0 = \tau(y')$, so $a(y) = +\infty = a(y')$. Let $f = a\chi(U)$. We have $f(y) = +\infty$ and $f(y') = 0$. Towards a contradiction, assume $f \in A$, as $f = h + \tilde{s}$, with $h \in C(X)$. Since $f(y) = +\infty$, it follows that $\tilde{s}(y) = +\infty$, which implies $\tilde{s}(y') = +\infty$, and then $f(y') = +\infty$, a contradiction.
\end{proof}

\begin{remarks}\label{8.4}
We feel it necessary to make, with apologies, the following rather inscrutable comments.

In our paper \cite{HW24b}, we have discussed various interpretations of the Freudenthal Spectral Theorem and the relations to SMP. In 5.5(b) there, we have alluded to Example \ref{8.3} above, which is possibly similar to an example of Veksler \cite[Example 3.3]{V76}. In this connection, it is important to note that \ref{8.3} has the property called ``$\forall e ODe^{dd}$", also ``(WF)$_V$", by virtue of \cite[2.3 and 2.5]{HW24b}. See \cite{HW24b}, and its references, for the meaning of all this.
\end{remarks}

  

\section*{Declarations}

\begin{itemize}
\item Funding: A.W. Hager received no funding; B. Wynne received PSC-CUNY Research Award \#66070-00-54.
\item Conflict of interest: Neither author had any conflicts of interest.
\item Author contribution: Both authors contributed equally to this paper.
\end{itemize}





\normalsize

\end{document}